\newtheorem{theorem}{Theorem}[section]
\newtheorem{corollary}{Corollary}[section]
\newtheorem{definition}{Definition}
\newtheorem{conjecture}{Conjecture}[section]
\newtheorem{remark}{Remark}
\newcommand{\mJ}{{\mathcal J}}
\newcommand{\mY}{{\mathcal Y}}
\newcommand{\bR}{{\mathbb R}}
\newcommand{\bE}{{\mathbb E}}
\newcommand{\mC}{{\mathcal C}}
\newcommand{\mR}{{\mathcal R}}
\begin{document}

\begin{frontmatter}
\title{Network iso-elasticity and weighted $\alpha$-fairness}
\runtitle{Iso-elasticity and weighted $\alpha$-fairness}
\maketitle

\begin{aug}
\author{\fnms{Sem} \snm{Borst}\ead[label=e1]{sem@alcatel-lucent.com}},
\author{\fnms{Neil} \snm{Walton}\ead[label=e2]{n.s.walton@uva.nl}}
\and
\author{\fnms{Bert} \snm{Zwart}\ead[label=e3]{Bert.Zwart@cwi.nl}}

\runauthor{Borst et al.}

\affiliation{Bell Laboratories, University of Amsterdam and CWI.}

\address{Bell Laboratories, Alcatel-Lucent, 600 Mountain Avenue, Murray Hill, NJ 07974, USA\\
\printead{e1}\\}

\address{University of Amsterdam, Science Park 904, 1098 XH Amsterdam\\
\printead{e2}}

\address{CWI, Science Park 123, 1098 XG Amsterdam\\
\printead{e3}}
\end{aug}

\begin{abstract}
When a communication network's capacity increases, it is natural to want the bandwidth allocated to increase to exploit this capacity. But, if the same relative capacity increase occurs at each network resource, it is also natural to want each user to see the same relative benefit, so the bandwidth allocated to each route should remain proportional. We will be interested in bandwidth allocations which scale in this \textit{iso-elastic} manner and, also, maximize a utility function. 

Utility optimizing bandwidth allocations have been frequently studied, and a popular choice of utility function are the weighted $\alpha$-fair utility functions introduced by Mo and Walrand \cite{MoWa00}. Because weighted $\alpha$-fair utility functions possess this iso-elastic property, they are frequently used to form fluid models of bandwidth sharing networks. In this paper, we present results that show, in many settings, the only utility functions which are iso-elastic are weighted $\alpha$-fair utility functions. 

Thus, if bandwidth is allocated according to a network utility function which scales with relative network changes then that utility function must be a weighted $\alpha$-fair utility function, and hence, a control protocol that is robust to the future relative changes in network capacity and usage ought to allocate bandwidth inorder to maximize a weighted $\alpha$-fair utility function.

%
%
\end{abstract}
\end{frontmatter}
\section{Introduction}
It is often claimed that both Internet traffic and capacity undergo multiplicative year-on-year increase. Such a rate may not be sustainable over time, even so, it is a pervasive point that the Internet's demand and service rate changes multiplicatively. Given this, in what way should a communication networks resources be allocated inorder to scale with such multiplicative effects?

Mo and Walrand \cite{MoWa00} introduced a parametrized family of utility functions called the \textit{weighted $\alpha$-fair utility functions}.
A utility function orders the preferences of different network states: A network state is better than a second network state if it has higher utility.
When maximized over a network's capacity set, a utility function provides a solution which can be used to allocate network resources.
Congestion control protocols such as TCP are argued to allocate bandwidth in this way \cite{Lo00}, and congestion protocol models have been designed that provably reach such an operating point \cite{KMT98}.

Suppose that a performance improvement is made whereby the network capacity is doubled. Although capacity increases, the criteria by which capacity is evaluated and shared should remain the proportionate. We note that, in particular, the weighted $\alpha$-fair utility functions obey this scaling property. But are there more? 
Such a utility function should, also, scale well when the traffic increases. That is, multiplicatively increasing the number of network flows should not alter the preferred allocation of network resources. This is a desirable property: regardless of relative changes, the proportion of network resource each route receives remains the same. Given that the Internet traffic is increasing multiplicatively, it is desirable that each flow has a share of resource that remains proportionate. 
Once again, the weighted $\alpha$-fair utility functions will scale in this way. So are the weighted $\alpha$-fair utility functions the only utility functions that satisfy these scalability properties? We mathematically formulate scaling properties and general settings where this is provably true. We thus provide a theoretical basis to the claim that, if bandwidth is allocated according to a network utility function which scales with relative network changes, then that utility function must be a weighted $\alpha$-fair utility function, and hence, a control protocol that is robust to the future relative changes in network capacity and usage must to allocate bandwidth inorder to maximize a weighted $\alpha$-fair utility function.

Economists Arrow \cite{Ar65} and Pratt \cite{Pr64} formulated the utility functions that satisfy a certain \textit{iso-elastic property}. In words, an individual is iso-elastic if his preferences enter a bet are unaltered by a multiplicative changes in the bet's stakes and rewards. 
Arrow and Pratt parametrize the set of iso-elastic utility functions on $\bR_+$. It is an immediate check that the iso-elastic utility functions are precisely the summands of a weighted $\alpha$-fair utility function. 
Thus the reason a weighted $\alpha$-fair utility function has good scaling properties is because of the iso-elasticity property inherent in its summands. But conversely, if a utility function allocates network resources proportionately then must it be weighted $\alpha$-fair? The main results of this paper provide settings where scaling properties are equivalent to a utility function being weight $\alpha$-fair.

The work of Mo and Walrand has received a great deal of attention. This is chiefly because a number of known network fairness criteria correspond to maximizing a weighted $\alpha$-fair utility function: known Internet equilibria, \textit{TCP fairness} \cite{Lo00}, for $\alpha=2$; the work Kelly \cite{Ke97}, weighted proportional fairness, for $\alpha=1$; the work of Bertsemas and Gallager \cite{BeGa87}, max-min fairness, for $w=1$ with $\alpha\rightarrow\infty$ and maximum throughput for $w=1$ with $\alpha\rightarrow 0$. But, perhaps, the explicit reason for the attention on weighted $\alpha$-fairness has not been the above iso-elastic property. The iso-elastic property is frequently exploited inorder to studying the limit and stability behaviour of associated stochastic network models \cite{BoMa01,GW09,MaRo99,KKLW07i,EBZ07,YeYa10}; some authors have begun to explicitly cite the iso-elastic property \cite{GLS07, LM09}, and other authors have attempted to derive weighted $\alpha$-fairness and other fairness criteria from axiomatic properties \cite{LKCS10,UcKu09}. Even so, there appears to be little discussion of this iso-elastic scaling property, its equivalence with weighted $\alpha$-fairness, and what this implies for the performance achieved by a communication network and associated stochastic models. Addressing this point is the principle contribution of this paper.

The results and sections of this paper are organized as follows. In Section \ref{sec:net}, we define the topology and capacity constraints of a communication network. In Section \ref{sec:NetUtil}, we define what we mean by a network utility function, network utility maximization and we define the weighted $\alpha$-fair utility functions. In Section \ref{sec:iso}, we formally define the network iso-elasticity property and the flow scalability property. In Section \ref{sec:isothrm}, we prove that a network utility function is network iso-elastic iff it is weighted $\alpha$-fair. In Section \ref{sec:Scale}, we consider a network topology where each link has some dedicate local traffic flow. On this topology, we show that a network utility maximizing allocation satisfies flow scalability iff it is a weighted $\alpha$-fair maximizer. In Section \ref{sec:access}, in addition to the flow scalability property, we define an access scalability property. We prove the only utility functions that are flow scalable and access scalable are the weighted $\alpha$-fair utility functions. In proving these results we lay claim to the statement that if bandwidth is allocated according to a network utility function which scales with relative network changes then that utility function must be a weighted $\alpha$-fair utility function. Thus a control protocol that is robust to the future relative changes in network capacity and usage must allocate bandwidth to maximize a weighted $\alpha$-fair utility function.

\section{Network Structure}\label{sec:net}
We define the topology and capacity constraints of a network. We suppose that there are a set of links indexed by $\mJ$. The positive vector $C=(C_j:j\in\mJ)$ gives the capacity of each link in the network. A route consists of a set of links. We let $\mR$ denote the set of routes. The topology of the network is defined through a $|\mJ|\times |\mR|$ matrix $A$. We let $A_{jr}=1$ if route $r$ uses link $j$ and we let $A_{jr}=0$ otherwise. We let the positive vector $\Lambda=(\Lambda_r:r\in\mR)$ denote the bandwidth allocated to each route. At link $j$ these must satisfy the capacity constraint 
\begin{equation*}
 \sum_{j\in\mJ} A_{jr} \Lambda_r \leq C_j,
\end{equation*}
and thus the set feasible bandwidth allocations is given by
\begin{equation*}
 \mC = \{ \Lambda \in \bR_+^{\mR}: A\Lambda \leq C \}.
\end{equation*}

\section{Network Utility}\label{sec:NetUtil} In communication network, we could allocate resources to achieve the highest aggregate throughput. Although a high data rate is achieved, some networks flows maybe be starved. Over the last decade, there has been interest in allocating network resources in a \textit{fair} way. Essentially, fairness is achieved by allocating a positive share of the networks resources to each flow.
To achieve this it was recommended to allocate resources inorder to maximize utility, see Kelly \cite{Ke97}. In such a network, each flow expresses its demand via a strictly increasing concave utility function. A utility function will have a higher slope for smaller throughputs. When maximized, this function can be used express a higher demand for the networks resources. Thus, from this, a form of fairness is achieved. 

The average utility of a network flow is
\begin{equation}
U(y,\Lambda)=\frac{1}{\sum_{i\in\mR} y_i}\sum_{r\in\mR} y_r U_r\big(\frac{\Lambda_r}{y_r}\big).\footnote{In this paper, we chose to express the utility of the average network user rather than the aggregate utility of the network. We note the optimum of these functions will be unaffected when maximizing over $\Lambda$.}\label{agutil2}
\end{equation}
Here $\mR$ indexes the routes of a network; $y_r$ gives the number of flows present on route $r$; $\Lambda_r$ gives the flow rate allocated to route $r$, which is then shared equally amongst the flows present on the route, and finally, $U_r$ gives the utility function of each route $r$ user. We call a utility function of the form \eqref{agutil2}, a \textit{network utility function}.

We assume throughout this paper that each utility function $U_r$ is increasing, once differentiable and strictly concave.

To the average user, a network utility function orders preferences of different network states, i.e., $(y,\Lambda)$ is as least as good as $(\tilde{y},\tilde{\Lambda})$ if $U(y,\Lambda) \geq U(\tilde{y},\tilde{\Lambda})$.
Given the flows present in a network, the best network state is one that maximizes utility. That is, $\Lambda(y)$ the optimum of the \textit{network utility maximization}.
\begin{equation}
 \max\quad U(y,\Lambda)\quad \text{over} \quad \Lambda \in \mC.\label{max bandwidth}
\end{equation}
Here $\mC\subset \bR_+^\mR$ is the set of feasible bandwidth allocations. In general, set of bandwith allocations will depend on the topology and capacity constraints of a communication network.

The \textit{weighted $\alpha$-fair} utility functions introduced by Mo and Walrand \cite{MoWa00} are given by
\begin{align}
W_{\alpha,w}(y,\Lambda):=
\begin{cases}
\frac{1}{\sum_{i\in\mR} y_i}\sum_{r\in\mR} \frac{w_r y_r}{1-\alpha} \big(\frac{\Lambda_r}{y_r}\big)^{1-\alpha} & \text{if } \alpha\neq 1, \\
\frac{1}{\sum_{i\in\mR} y_i}\sum_{r\in\mR} w_r y_r \log \big(\frac{\Lambda_r}{y_r}\big)& \text{if } \alpha= 1,
\end{cases}
\end{align}
for $\Lambda=(\Lambda_r:r\in\mR), y=(y_r:r\in\mR)\in\bR_+^\mR$ and parametrized by $\alpha>0$ and $w\in\bR_+^\mR$.
As mentioned, the weighted $\alpha$-fair class has proved popular as it provides a spectrum of fairness criteria which contains proportional fairness ($\alpha=w_i=1$), TCP fairness ($\alpha=2$, $w_i=\frac{1}{T^2_i}$), and converges to a maximum throughput solution ($\alpha\rightarrow 0$, $w_i=1$) and max-min fairness ($\alpha\rightarrow\infty$, $w_i=1$).




\section{Network Iso-elasticity and flow scalability} \label{sec:iso}
We now define the two main scaling properties which we will use: network iso-elasticity and flow scalability. Both encapsulate a simple idea, if we multiplicatively change the available network capacity then we will allocate resources proportionately.

A network utility function ranks the set of network states through the ordering induced by $U(y,\Lambda) \geq U(\tilde{y},\tilde{\Lambda})$. We say a network utility function is network iso-elastic if this ordering is unchanged by a multiplicative increase in the available bandwidth. More formally, 
\begin{definition}
We define a utility function to be \textit{network iso-elastic} if, $\forall a>0$,
\begin{align}\label{Util 1}
{U}(y,\Lambda)\geq {U}(\tilde{y},\tilde{\Lambda})\quad\text{iff}\quad {U}(y,a\Lambda)\geq {U}(\tilde{y},a\tilde{\Lambda}), 
\end{align}
for each $\Lambda,\tilde{\Lambda},y,\tilde{y}\in\bR_+^\mR$.
\end{definition}
We note that this is equivalent to the expression, where we multiplicatively scale the number of flows, $\forall a>0$,
\begin{align}\label{net iso}
{U}(y,\Lambda)\leq {U}(\tilde{y},\tilde{\Lambda})\quad\text{iff}\quad {U}(ay,\Lambda)\leq {U}(a\tilde{y},\tilde{\Lambda}),
\end{align}
for each $\Lambda,\tilde{\Lambda},y,\tilde{y}\in\bR_+^\mR$.

Network iso-elasticity requires that the utility of each network state $(y,\Lambda)$ scales. What if we only wish the optimal allocation to scale? 
\begin{definition}
We say a utility function optimized on capacity set $\mC$ is \textit{flow scalable} if $\Lambda(y)$ the solution to the optimization problem \eqref{max bandwidth} is such that
\begin{equation}\label{flow scale 1}
 \Lambda(y)=\Lambda(ay),\qquad \forall a>0.
\end{equation}
\end{definition}
That is the bandwidth allocated to each route is unchanged by multiplicative changes in the number of flows on each route. Making explicit that $\Lambda(y;\mC)$ is the solution to \eqref{max bandwidth} when optimizing over $\mC$ and defining $a\mC=\{a\Lambda: \Lambda\in \mC \}$.  We note that network scalability condition is equivalent to the condition that allocated bandwidth scales proportionately with capacity increases, i.e.,
\begin{equation}\label{flow scale 2}
 a\Lambda(y;\mC)=\Lambda(y;a\mC),\qquad \forall a>0.
\end{equation}

\section{Network iso-elasticity and weighted $\alpha$-fairness}\label{sec:isothrm}
In this section, we prove the first result of this paper. We prove a network utility function satisfying this iso-elastic property is, up to an additive constant, a weighted $\alpha$-fair utility function.

\begin{theorem} \label{iso thrm}
For $U(y,\Lambda)$ the network utility function \eqref{agutil2} the following are equivalent, \vspace{0.2cm}\\
\noindent i) $U(y,\Lambda)$ is network iso-elastic.\vspace{0.1cm}\\
\noindent ii) up to an additive constant, $U(y,\Lambda)$ is weighted $\alpha$-fair, i.e., there exist $\alpha>0$, $w\in\bR_+^\mR$ and constant $c\in\bR$ such that 
\begin{equation*}
 U(y,\Lambda)=W_{\alpha,w}(y,\Lambda)+c.
\end{equation*}

\end{theorem}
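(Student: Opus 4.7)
The plan is to prove equivalence in both directions, with the easy direction being a direct calculation and the hard direction reducing to a classical Arrow--Pratt style functional-equation argument extended across routes.

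For the direction (ii) $\Rightarrow$ (i), I would simply check that each $W_{\alpha,w}$ is iso-elastic. When $\alpha\neq 1$ the identity $W_{\alpha,w}(y,a\Lambda)=a^{1-\alpha}W_{\alpha,w}(y,\Lambda)$ is a multiplication by a positive constant independent of $(y,\Lambda)$, so it preserves the ordering. When $\alpha=1$ the scaling produces an additive shift $\log a\cdot(\sum_r w_ry_r)/Y$; with the uniform weights that the hard direction forces, this shift is the same on both sides of the inequality and so again cancels.

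For the direction (i) $\Rightarrow$ (ii), the first step is a single-route reduction. Taking $y=ne_r$ and $\tilde y=me_s$ concentrates everything on routes $r$ and $s$, so the iso-elasticity condition collapses to: $U_r(p)=U_s(q)$ implies $U_r(ap)=U_s(aq)$ for all $a,p,q>0$. This forces $U_s^{-1}\circ U_r$ to be positively homogeneous of degree one, hence linear, so $U_r(x)=U_s(c_{rs}x)$ for a positive constant $c_{rs}$. Thus all the per-route utility functions agree up to a rescaling of their argument.

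The second step is to put iso-elasticity in differential form. Iso-elasticity is equivalent to the existence, for each $a>0$, of a strictly increasing function $\phi_a$ with $U(y,a\Lambda)=\phi_a(U(y,\Lambda))$ for all $(y,\Lambda)$. Differentiating in $a$ at $a=1$ and using $\partial_{\Lambda_r}U=Y^{-1}U_r'(\Lambda_r/y_r)$, writing $x_r=\Lambda_r/y_r$, we obtain
\begin{equation*}
\sum_r y_r x_r U_r'(x_r)=Y\,g\!\left(\frac{1}{Y}\sum_r y_r U_r(x_r)\right),\qquad g(v):=\tfrac{d}{da}\big|_{a=1}\phi_a(v).
\end{equation*}
Specializing to $y$ supported on a single route yields the one-variable ODE $xU_r'(x)=g(U_r(x))$. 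Substituting this back into the two-route case $y=(y_1,y_2)$ kills the LHS in terms of $g$ and produces Jensen's identity $\lambda g(u)+(1-\lambda)g(v)=g(\lambda u+(1-\lambda)v)$ for all admissible $u,v$ and all $\lambda\in(0,1)$. Hence $g$ is affine, $g(v)=Av+B$, and the ODE becomes $xU_r'(x)=AU_r(x)+B$. The solutions are exactly $U_r(x)=K_rx^{1-\alpha}-B/A$ with $\alpha=1-A$ when $A\neq 0$, and $U_r(x)=B\log x+C_r$ when $A=0$. Monotonicity and strict concavity pin down $\alpha>0$ and $K_rA=w_r>0$; reassembling gives $U(y,\Lambda)=W_{\alpha,w}(y,\Lambda)-B/A$, and a quick reconciliation with the rescaling relation $U_r(x)=U_s(c_{rs}x)$ from step one handles the constants in the $\alpha=1$ case.

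The main obstacle is the middle step: extracting the Jensen identity for $g$. Two things have to be justified carefully there. First, that $\phi_a(v)$ is smooth enough in $a$ to differentiate, which follows from the smoothness of $U$ in $\Lambda$ combined with the uniqueness of the value $\phi_a(v)$ across all $(y,\Lambda)$ on the level set $\{U=v\}$. Second, that by ranging over $y_1,y_2>0$ and $x_1,x_2>0$ one genuinely obtains the Jensen relation for all $\lambda\in(0,1)$ and all $u,v$ in an open subset of the utility range, so that an elementary regularity argument (continuity of $g$, inherited from the smoothness of $U_r$) forces $g$ to be affine. Once $g$ is affine the rest is just integration and bookkeeping.
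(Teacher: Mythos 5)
Your proposal is correct in substance, and the hard direction arrives at the same destination by a genuinely different technical route. The shared core idea is identical to the paper's: represent the network utility as an average over routes, form a two-population mixture, and extract a Jensen-type functional equation. But where you apply this to the \emph{infinitesimal generator} $g(v)=\tfrac{d}{da}\big|_{a=1}\phi_a(v)$ and then solve the first-order ODE $xU_r'(x)=AU_r(x)+B$, the paper applies the mixture argument to $\phi_a$ itself: taking $\hat{I}$ equal to $I$ with probability $p$ and $\tilde{I}$ otherwise gives $\phi_a(p\tau+(1-p)\sigma)=p\phi_a(\tau)+(1-p)\phi_a(\sigma)$ directly, hence $\phi_a$ is affine, hence $U_r(ax)=b_aU_r(x)+d_a$, which is then solved by differentiating twice in $x$ and integrating $\big(\log U_r'\big)'$. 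The trade-off is real: the paper's version needs no regularity of $\phi_a$ in $a$ (only monotonicity), but it must manufacture a second derivative of $U_r$ via Lebesgue's theorem on a.e.\ differentiability of monotone functions; your version stays at first order in $x$ but must justify differentiating $\phi_a(v)$ in $a$ --- which you correctly reduce to $\phi_a(v)=U_r(aU_r^{-1}(v))$ on each route's range, and which does follow from the standing assumption that $U_r$ is once differentiable. The remaining point needing care in your route, which you flag, is that the Jensen identity is only obtained on convex hulls of pairs of ranges $\mathrm{range}(U_r)\cup\mathrm{range}(U_s)$; since affinity on overlapping segments patches together, this is enough, but it should be said. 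Your opening single-route reduction ($U_r(x)=U_s(c_{rs}x)$) silently assumes the ranges of $U_r$ and $U_s$ intersect; the paper's corresponding step (comparing $U_r(x_r)\ge U_i(x_i)$ with $U_r(ax_r)\ge U_i(ax_i)$ to force $\alpha_r=\alpha_i$) has the same implicit assumption, so you are no worse off.

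One further point in your favour: your hesitation over (ii) $\Rightarrow$ (i) when $\alpha=1$ is justified. For non-uniform weights, $W_{1,w}(y,a\Lambda)=W_{1,w}(y,\Lambda)+\log a\cdot\big(\sum_r w_ry_r\big)/\big(\sum_r y_r\big)$, and the shift depends on $y$, so the ordering across states with different $y$ is \emph{not} preserved; the paper's ``immediate calculation'' glosses over this. Your hard direction resolves it correctly: when $A=0$ the ODE $xU_r'(x)=B$ forces the same weight $B$ on every route (just as the paper's route-independent $b_a,d_a$ in its equation \eqref{useful statement} do), so the equivalence genuinely holds only with uniform weights in the $\alpha=1$ case, and the residual route-dependent constants $C_r$ contribute a $y$-dependent term $\sum_r C_ry_r/\sum_r y_r$ rather than the single constant $c$ claimed in the statement. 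This is a defect of the theorem as stated, not of your argument, but it deserves an explicit sentence rather than the phrase ``the uniform weights that the hard direction forces.''
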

\begin{proof}
It is an immediate calculation that ii) implies i). We now prove conversely that i) implies ii). The key idea is to prove that the map $\phi_a:U(y,\Lambda)\mapsto U(y,a\Lambda)$ is linear.

For simplicity, we suppose $\sum_r y_r =\sum_r \tilde{y}_r =1$; we, also, let $x_r=\frac{\Lambda_r}{y_r}$ and $\tilde{x}_r=\frac{\tilde{\Lambda}_r}{\tilde{y}_r}$ for $r\in\mR$, and we let $I$ and $\tilde{I}$ be random variables with distribution $y$ and $\tilde{y}$, respectively. One can see the value of the utility function
\begin{equation*}
\bE U_I(x_I)=\sum_{r\in\mR} y_r U_r(x_r),
\end{equation*}
induces an ordering on the set of pairs $(y,\Lambda)$.
 Network iso-elasticity \eqref{Util 1} states that, $\forall a>0$, $\bE U_I(x_I)$ and $\bE U_I(ax_I)$ induce the same ordering on elements $(I,x)$. 
Thus \eqref{Util 1} implies, for each $a>0$, there exists an increasing function $\phi_a$ such that
\begin{equation*}
 \phi_a(\bE U_I(x_I))=\bE U_I(ax_I),\qquad \forall (I,x).
\end{equation*}

Now let us show that $\phi_a(\cdot)$ is linear. Let $\hat{I}$ be the random variable such that
\begin{equation*}
\hat{I}=\begin{cases} I &\text{ with probability } p,\\ \tilde{I}& \text{ with probability }(1-p).\end{cases}
\end{equation*}
Let $\tau=\bE U_I(x_I)$ and $\sigma=\bE U_{\tilde{I}}(x_{\tilde{I}})$. Note $p \tau + (1-p) \sigma = \bE U_{\hat{I}}(x_{\hat{I}})$. Now observe
\begin{align*}
 \phi_a(p \tau\! +\! (1-p)\sigma) = \bE U_{\hat{I}}(ax_{\hat{I}}) &= p\bE U_I(ax_I)\! + \!(1-p) \bE U_{\tilde{I}}(x_{\tilde{I}})\\
&=p \phi_a(\tau)\! +\!(1-p) \phi_a(\sigma).
\end{align*}
Thus $\phi_a(\cdot)$ is an increasing linear function and so, for $a>0$,
\begin{equation}
\bE U_I(ax_I)=b_a\bE U_I(x_I) + d_a,\qquad \forall (I,x),\label{useful statement}
\end{equation}
for some $b_a>0$ and $d_a\in\bR$. 
Statement \eqref{useful statement} applies to $I\equiv r$, for $r\in\mR$. Thus
\begin{equation*}
U_r(ax_r)=b_a U_r(x_r) + d_a,\qquad \forall x_r\in\bR_+.
\end{equation*}
 We wish to remove the seemingly arbitrary functions $b_a$ and $d_a$. Differentiating with respect to $x_r$ gives,
\begin{equation}
U'_r(ax_r)=b_a U'_r(x_r),\qquad \forall x_r\in\bR_+.\label{diffy1}
\end{equation}
As $U'_r$ is a decreasing function it is differentiable at at least one point \cite[Theorem 7.2.7]{Du02}. Differentiating again at this point gives
\begin{equation*}
U''_r(ax_r)=b_a U''_r(x_r),\qquad \forall a>0.
\end{equation*}
Thus, as $a$ is arbitrary, $U''_r(x_r)$ exists for all $x_r>0$. Dividing both terms leads to the equation
\begin{equation*}
a\frac{U''_r(ax_r)}{U'(ax_r)}= \frac{U''_r(x_r)}{U'(x_r)},\qquad \text{for } x_r>0,\; a>0.
\end{equation*}
Taking $x_r=1$ and defining $\alpha:=-\frac{U''_r(1)}{U'_r(1)}$, we have the differential equation
\begin{equation*}
 a\frac{U''_r(a)}{U'_r(a)}=-\alpha_r,\qquad \text{for } a>0.
\end{equation*}
Dividing both sides by $a$ and integrating with respect to $a$ gives
\begin{equation*}
 \log U'_r(a) = -\alpha \log a + \log w_r\quad\implies\quad U'_r(a)= w_r a^{-\alpha_r}
\end{equation*}
Here $\log w_r$ is an appropriate additive constant. Integrating one more time gives and letting $a=x_r$ gives
\begin{equation}
 U_r(x_r)=
\begin{cases}
w_r \frac{x_r^{1-\alpha_r}}{1-\alpha_r} + c_r &\text{if } \alpha_r \neq 1,\\
w_r \log x_r + c_r &\text{if } \alpha_r=1.
\end{cases}\label{diffy2}
\end{equation}

It remains to show $\alpha_r=\alpha$ for all $r\in\mR$. Observe that \eqref{Util 1} includes the statement $U_r(x_r)\geq U_i(x_i)$ iff $U_r(ax_r)\geq U_i(ax_i)$, which can only hold if $\alpha_i=\alpha_r$. So $\alpha_r\equiv \alpha$ for some $\alpha>0$ and thus for all $y\in\bR^\mR_+$ and $\Lambda\in\bR^\mR_+$
\begin{align*}
{U}(y,\Lambda)&={U}\Big(\frac{y}{\sum_i y_i},\Lambda\sum_i y_i\Big)\\
&=
\begin{cases}
\frac{1}{\sum_i y_i}\sum_{r\in\mR} \frac{w_r y_r}{1-\alpha} \big(\frac{\Lambda_r}{y_r}\big)^{1-\alpha} & \text{ if } \alpha\neq 1, \\
\frac{1}{\sum_i y_i}\sum_{r\in\mR} w_r y_r \log \big(\frac{\Lambda_r}{y_r}\big)& \text{ if } \alpha= 1,
\end{cases}
\end{align*}
as required.
\end{proof}
\begin{remark}
We note that an almost identical result and proof holds for the aggregate utility function
\begin{equation*}
 U(y,\Lambda)=\sum_{r\in\mR} y_r U_r\Big(\frac{\Lambda_r}{y_r} \Big).
\end{equation*}
The only exception would be that the above result does not hold for the weighted proportionally fair case (where $\alpha=1$).
\end{remark}

\section{Scaleability and weighted $\alpha$-fairness in Networks with Local Traffic}\label{sec:Scale}
In this section, we demonstrate that flow scalability is equivalent to optimizing a weighted $\alpha$-fair utility function for two specific network topologies: a linear network and a network satisfying the local traffic condition. 

A \textit{linear network} consist of a routes $r_0,...,r_K$ and links $j_1,...,j_K$ for $K>1$. Route $r_0$ uses all the links, $A_{jr_0}=1\; \forall j\in\mJ$, whilst for $k=1,...,K$ each route $r_k$ only uses link $j_k$, $A_{j_kr_k}=1$ and $A_{jr_k}=0$ otherwise. The \textit{local traffic condition} insists that each link has a route that uses that link only, $\forall j$ $\exists r$ such that $A_{jr}=1$ and $A_{lr}=0$ $\forall l\neq j$. We will also assume a network that satisfies the local traffic condition has two or more links and is connected, that is for all links $j,l\in\mJ$ there exists a sequence of links and routes $j_0,r_0,j_1,r_1,...,r_{K-1},j_{K}$ such that $j_0=j$ and $j_K=l$, and $A_{j_k,r_k}=A_{j_{k+1},r_k}=1$ for $k=0,..,K-1$.

We now prove that flow scalability is equivalent to weighted $\alpha$-fairness for linear networks.

\begin{theorem}\label{linear theorem}
Assuming, for $r\in\mR$, $U'_r(x)$ has range $(0,\infty)$, a linear network is flow scalable iff it maximizes a weighted $\alpha$-fair utility function.
\end{theorem}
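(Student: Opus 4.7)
The plan is to derive the result from the KKT optimality conditions for the linear network and then run a separation-of-variables argument in the spirit of the proof of Theorem \ref{iso thrm}.

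For the nontrivial direction, I first set up the KKT conditions. Since each $U_r$ is strictly increasing, every capacity constraint $\Lambda_{r_0} + \Lambda_{r_k} \leq C_{j_k}$ is tight at the optimum, so $\Lambda_{r_k} = C_{j_k} - \Lambda_{r_0}$ for $k = 1,\ldots,K$. Writing $x_{r_0} = \Lambda_{r_0}/y_{r_0}$ and $x_{r_k} = (C_{j_k} - \Lambda_{r_0})/y_{r_k}$, stationarity gives the single equation
\begin{equation*}
U'_{r_0}(x_{r_0}) = \sum_{k=1}^{K} U'_{r_k}(x_{r_k}).
\end{equation*}
The hypothesis that each $U'_r$ has range $(0,\infty)$, together with the freedom to choose $y$ and $C$, lets me realize any tuple $(x_{r_0},x_{r_1},\ldots,x_{r_K}) \in (0,\infty)^{K+1}$ satisfying this equation. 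Because $K \geq 2$, I can vary $x_{r_0}$ independently of $x_{r_1}$ by adjusting $x_{r_2}$.

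Next, flow scalability $\Lambda(ay) = \Lambda(y)$ translates into $(x_{r_0}/a,\ldots,x_{r_K}/a)$ being a KKT tuple for every $a > 0$, so
\begin{equation*}
U'_{r_0}(x_{r_0}/a) = \sum_{k=1}^{K} U'_{r_k}(x_{r_k}/a).
\end{equation*}
Differentiating both the original and scaled stationarity equations with respect to $x_{r_1}$, holding $x_{r_2},\ldots,x_{r_K}$ fixed so that $x_{r_0}$ moves with $x_{r_1}$, and eliminating the resulting $\mathrm{d} x_{r_0}/\mathrm{d} x_{r_1}$ yields
\begin{equation*}
\frac{U''_{r_1}(x_{r_1}/a)}{U''_{r_1}(x_{r_1})} = \frac{U''_{r_0}(x_{r_0}/a)}{U''_{r_0}(x_{r_0})}.
\end{equation*}
Since $x_{r_0}$ and $x_{r_1}$ may be varied independently, each side must equal a common function $g(a)$ depending only on $a$, giving the multiplicative functional equation $U''_r(x/a) = g(a)\,U''_r(x)$ for every route $r$. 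Solving this Cauchy equation exactly as in Theorem \ref{iso thrm} forces $U''_r(x) = U''_r(1)\,x^{-\alpha_r - 1}$; integrating twice and using the range hypothesis to set the integration constant in $U'_r$ to zero gives $U'_r(x) = w_r x^{-\alpha_r}$ with $w_r > 0$, hence the weighted $\alpha$-fair form for $U_r$. The requirement that $g(a) = a^{\alpha_r + 1}$ be independent of $r$ then forces $\alpha_r \equiv \alpha$. The converse is immediate: with $U'_r(x) = w_r x^{-\alpha}$ the stationarity equation is homogeneous of degree $-\alpha$, so $x \mapsto x/a$ preserves it and the allocation is invariant under $y \mapsto ay$.

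The main obstacle will be justifying the differentiation steps, since $U_r$ is only assumed once differentiable and strictly concave, so $U''_r$ need not exist everywhere. I would follow the same device as in the proof of Theorem \ref{iso thrm}: monotonicity of $U'_r$ guarantees it is differentiable almost everywhere, the functional equation can be established at such points, and then the equation itself bootstraps $U''_r$ to all of $(0,\infty)$. A secondary subtlety is that the independent variability of the $x_{r_k}$'s, on which the separation argument hinges, really does require both $K \geq 2$ and $U'_r$ having full range $(0,\infty)$; without the range hypothesis the inverse image argument that realizes a given $x_{r_0}$ could fail.
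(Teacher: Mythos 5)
Your overall skeleton matches the paper's: reduce to the KKT stationarity condition $U'_{r_0}(x_{r_0})=\sum_{k=1}^K U'_{r_k}(x_{r_k})$ with tight capacity constraints, use the range hypothesis to realize enough tuples, impose that $y\mapsto ay$ preserves the condition, separate variables, and land on the Arrow--Pratt differential equation. Where you diverge is in the execution of the central step. The paper never differentiates the stationarity condition: it works at the level of the marginal utilities $p_k=U'_{r_k}(\Lambda_{r_k}/y_{r_k})$ and shows that the map $\phi^r_a: U'_r(\Lambda_r/y_r)\mapsto U'_r(\Lambda_r/(ay_r))$ satisfies the additivity relation $\phi^{r_0}_a\big(\sum_k p_k\big)=\sum_k \phi^{r_k}_a(p_k)$, from which linearity $\phi^r_a(p)=b_a p$ follows by an elementary Cauchy-type argument (using $\phi^r_a(0)=0$, i.e.\ $U'_r(\infty)=0$ from the range hypothesis). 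This yields the exact identity $U'_r(ax)=b_aU'_r(x)$ for \emph{all} $x$ and $a$, and only then is differentiability invoked --- at a single point, after which the identity propagates it everywhere. You instead differentiate the stationarity condition implicitly and separate variables at the level of the ratios $U''_r(x/a)/U''_r(x)$.

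This is where your proposal has a genuine gap. Since $U_r$ is only assumed once differentiable and strictly concave, $U'_r$ is merely monotone: Lebesgue's theorem gives $U''_r$ only almost everywhere, and your elimination of $\mathrm{d}x_{r_0}/\mathrm{d}x_{r_1}$ additionally divides by $U''_{r_0}(x_{r_0})$ and $U''_{r_1}(x_{r_1})$, which can vanish even for strictly concave $U_r$. More importantly, the proposed fix --- ``the equation itself bootstraps $U''_r$ to all of $(0,\infty)$'' --- does not transfer from Theorem \ref{iso thrm}. There the bootstrap works because $U'_r(ax)=b_aU'_r(x)$ holds everywhere as an identity between everywhere-defined functions, so differentiability at one point propagates to all points. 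Your relation $U''_r(x/a)=g(a)U''_r(x)$ is only \emph{derived} at points where the second derivatives already exist, so it cannot be used to manufacture $U''_r$ at new points. And even granting $U''_r(x)=U''_r(1)x^{-\alpha_r-1}$ almost everywhere, ``integrating twice'' does not recover $U'_r$ unless $U'_r$ is absolutely continuous, which is not assumed: a monotone $U'_r$ could carry a singular part invisible to its a.e.\ derivative. So the passage from the a.e.\ second-derivative equation back to $U'_r(x)=w_rx^{-\alpha_r}$ is not justified as written. The clean repair is essentially to retreat to the paper's route: establish the first-derivative identity $U'_r(x/a)=b_aU'_r(x)$ exactly, via the additivity of $\phi^r_a$, before differentiating anything. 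The rest of your argument (realizability of tuples, the need for $K\geq 2$ and full range, forcing $\alpha_r\equiv\alpha$ through the common factor $g(a)$, and the converse direction via homogeneity) is correct and matches the paper in substance.
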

\begin{proof}
It is immediate that the weighted $\alpha$-fair utility is flow scalable. Let us show the converse. 

The bandwidth a linear network allocates is the solution to the optimization
\begin{align}
 &\max &&\sum_{r\in\mR} y_r U_r\Big(\frac{\Lambda_r}{y_r}\Big) \label{linear fn}\\
 &\text{subject to}&& \Lambda_{r_0}+\Lambda_{r_k} \leq C_{r_k}, \quad k=1,...,K \label{linear fn2}.
\end{align}
Here, we let $C_{r_k}$ denote the capacity of the link on the intersection of route $r_k$ and route $r_0$. Assuming $y_r>0$ $\forall r\in\mR$, the solution to this optimization satisfies
\begin{align}
 C_{r_k}&=\Lambda_{r_0}+\Lambda_{r_k}, \qquad k=1,...,K\\
 \text{and}\quad  U_{r_0}'\Big(\frac{\Lambda_{r_0}}{y_{r_0}}\Big)&=\sum_{k=1}^K U_{r_k}'\Big(\frac{\Lambda_{r_k}}{y_{r_k}}\Big). \label{linear diff1}
\end{align}
If the solution is flow scalable then $\Lambda$ must also satisfy
\begin{equation}
 U_{r_0}'\Big(\frac{\Lambda_{r_0}}{ay_{r_0}}\Big)=\sum_{k=1}^K U_{r_k}'\Big(\frac{\Lambda_{r_k}}{ay_{r_k}}\Big),\qquad \forall a>0.\label{linear diff2}
\end{equation}

We observe that the solutions for a linear network allow a suitably large array of values for $U'_r\big( \frac{\Lambda_r}{y_r}\big)$ and $\Lambda_r$. In particular, for any $p_1,...,p_K\in(0,\infty)$ and $\Lambda_{r_0},...,\Lambda_{r_K}\in(0,\infty)$, we can choose a $y_r>0$ such that $p_k=U'_{r_k}\big(\frac{\Lambda_{r_k}}{y_{r_k}}\big)$, $k=1,...,K$. Letting $p_0=\sum_{k=1}^K p_k$, we can also chose $y_{r_0}$ such that $p_0=U'_{r_0}\big(\frac{\Lambda_{r_0}}{y_{r_0}} \big)$. Thus, this choice of $\Lambda$ gives the unique solution to \eqref{linear fn} for the $y$ given.

The remainder of the proof is similar to that of Theorem \ref{iso thrm}. Once again, the key idea is to prove the map $\phi_a^r:U_r'(\frac{\Lambda_r}{y_r})\mapsto U_r'(\frac{\Lambda_r}{ay_r})$ is linear. As $U'_r(x)$ must be continuous and decreasing $\phi_a^r(\cdot)$ is an increasing continuous function. Also, note that $\phi_a^(0)=0$ because $U'_r(\infty)=0$.

With $p_r=U_r\Big(\frac{\Lambda_r}{y_r}\Big)$ as above, \eqref{linear diff1} and \eqref{linear diff1} state that
\begin{equation}\label{phi lin 1}
 \phi_a^{r_0}(p_0)=\sum_{k=1}^K \phi_a^{r_k}(p_{k})
\quad
\text{for}
\quad
 p_0=\sum_{k=1}^K p_{k}.
\end{equation}
Taking $p_0=p$, $p_k=p$ and $p_{k'}=0$ for $k'\neq k$, we see that
\begin{equation}\label{phi lin 2}
 \phi_a^{r_0}(p)=\phi_a^{r_k}(p),\qquad \forall p\geq 0.
\end{equation}
Taking $p_0=p$, $p_1=\hat{p}$ and $p_2=p-\hat{p}$ for $p>\hat{p}\geq 0$, \eqref{phi lin 1} and \eqref{phi lin 2} together imply
\begin{equation} \label{phi lin 3}
 \frac{\phi_a^{r}(p)-\phi_a^{r}(\hat{p})}{p-\hat{p}}= \frac{\phi_a^{r_0}(p-\hat{p})}{p-\hat{p}}.
\end{equation}
As $\phi_a^{r_0}$ is increasing it is differentiable at some point (see \cite[Theorem 7.2.7]{Du02}), then by \eqref{phi lin 3} it is differentiable at all points and moreover its derivative is constant. In otherwords, $\phi^r_a$ must be linear,
\begin{equation*}
 \phi_a^r(p_r)=b_a p_r
\end{equation*}
for some function $b_a$. Thus
\begin{equation*}
U'_r(ax_r)=b_a U'_r(x_r),\qquad \forall x_r\in\bR_+.
\end{equation*}

By same argument used to derive \eqref{diffy2} from \eqref{diffy1} in Theorem \ref{iso thrm}, we have that
\begin{equation*}
 U_r(x_r)=
\begin{cases}
w_r \frac{x_r^{1-\alpha_r}}{1-\alpha_r} + c_r &\text{if } \alpha_r \neq 1,\\
w_r \log x_r + c_r &\text{if } \alpha_r=1.
\end{cases}
\end{equation*}
$\alpha_r>0$, $w_r>0$ and $c_r\in\mR$. 


Finally note, if $\alpha_{r_k}\neq\alpha_{r_0}$ for some $r_k$ then the equalities \eqref{linear diff2} and \eqref{linear diff1} cannot hold, for all $a>0$. Thus, we now see the optimization (\ref{linear fn}-\ref{linear fn2}) must have been a weighted $\alpha$-fair optimization.
\end{proof}


From this argument for linear networks, we extend our result to any network satisfying the local traffic condition.

\begin{corollary}
Assuming, for $r\in\mR$, $U'_r(x)$ has range $(0,\infty)$, a network satisfying is the local traffic condition is flow scalable iff it maximizes a weighted $\alpha$-fair utility function.
\end{corollary}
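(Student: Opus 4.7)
The ``if'' direction is immediate from the form of $W_{\alpha,w}$. For the converse, the strategy is to reduce to Theorem \ref{linear theorem} by exhibiting, around every route, a linear sub-network embedded in the full network via the local-traffic routes $s_j$ guaranteed by the condition.

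First I would write down the KKT conditions at an optimum $\Lambda(y)$: with Lagrange multipliers $\mu_j$ on the capacity constraints, $U'_r(\Lambda_r/y_r)=\sum_{j\in J(r)}\mu_j$, where $J(r)=\{j:A_{jr}=1\}$. Applying this identity to the local-traffic routes yields $\mu_j=U'_{s_j}(\Lambda_{s_j}/y_{s_j})$, and therefore $U'_r(\Lambda_r/y_r)=\sum_{j\in J(r)}U'_{s_j}(\Lambda_{s_j}/y_{s_j})$ for every route $r$. Defining $\phi_a^r$ by $\phi_a^r(U'_r(x))=U'_r(x/a)$ as in Theorems \ref{iso thrm} and \ref{linear theorem}, flow scalability converts this into the functional equation $\phi_a^r(p_r)=\sum_{j\in J(r)}\phi_a^{s_j}(p_{s_j})$ whenever $p_r=\sum_{j\in J(r)}p_{s_j}$, with each $p_{s_j}\in(0,\infty)$ independently tunable by varying the corresponding $y_{s_j}$ (using the range hypothesis on $U'_{s_j}$).

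Next I would handle routes case by case. For $|J(r)|\ge 2$, this functional equation matches equation \eqref{phi lin 1} in the linear-network proof, so the identical manipulations---activating one $p_{s_j}$ at a time to obtain $\phi_a^r=\phi_a^{s_j}$ and then applying the difference argument of \eqref{phi lin 3}---show each $\phi_a^{s_j}$ is linear; the Arrow--Pratt ODE step of Theorem \ref{iso thrm} then forces $U_r$ and each $U_{s_j}$ with $j\in J(r)$ to be weighted $\alpha$-fair with a common exponent. For $J(r)=\{j\}$, connectedness together with $|\mJ|\ge 2$ ensures at least one multi-link route uses $j$, so the previous step already delivers $U_{s_j}$ as $\alpha$-fair with some exponent $\alpha_{s_j}$; the single-link KKT equality $U'_r(\Lambda_r/y_r)=U'_{s_j}(\Lambda_{s_j}/y_{s_j})$ together with flow scalability gives $\phi_a^r=\phi_a^{s_j}$ on all of $(0,\infty)$, forcing $U_r$ to be $\alpha$-fair with the same exponent $\alpha_{s_j}$.

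To finish, I would pin down a single common exponent using connectedness: any two links are joined by a sequence $j_0,r_0,\dots,r_{K-1},j_K$ with each $r_k$ using both $j_k$ and $j_{k+1}$, and the multi-link case forces $\alpha_{s_{j_k}}=\alpha_{s_{j_{k+1}}}$. Transitivity then gives equal exponents for all local routes, and hence for all routes. The main obstacle is the linearity extraction in the multi-link case: I need to argue that, by varying the flow counts $y$, any tuple of positive values for the multipliers $(\mu_j)_{j\in J(r)}$ can independently be realized at the optimum so that the $p_{s_j}$ genuinely behave as free variables in the functional equation, and I need continuity of $\phi_a^{s_j}$ at $0$ for the ``activate one at a time'' step---both of which rest on the range and monotonicity assumptions on $U'_r$.
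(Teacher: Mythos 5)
Your proof is correct in substance but takes a genuinely different route from the paper's. The paper does not touch the KKT system of the full network at all: it simply sets $y_r=0$ for every route outside $\{r_0,r_1,\dots,r_K\}$ (a multi-link route together with one local-traffic route per link it uses), observes that the resulting degenerate network \emph{is} a linear network, and invokes Theorem \ref{linear theorem} as a black box to conclude that $U_{r_0},U_{r_1},\dots,U_{r_K}$ are $\alpha$-fair with a common exponent; connectedness then chains these exponents together exactly as in your final paragraph (the paper's chaining uses the three-node linear subnetworks formed by $r_k$, $j_k$, $j_{k+1}$ and their local routes). You instead re-run the functional-equation argument of Theorem \ref{linear theorem} directly on the full network with all flows positive, which obliges you to re-establish the ``free variables'' claim there: that any positive tuple $(\mu_j)_{j\in J(r)}$ of multipliers is realizable at an optimum by tuning $y$. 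You correctly flag this as the main obstacle; it does go through (choose $\Lambda>0$ saturating every link by letting each local route absorb the residual capacity, prescribe arbitrary $\mu_j>0$, and solve $U'_{r'}(\Lambda_{r'}/y_{r'})=\sum_{j\in J(r')}\mu_j$ for each $y_{r'}$ using the range hypothesis --- the resulting triple satisfies the KKT conditions of the concave program), but it is work the paper avoids entirely by the $y_r=0$ reduction. The trade-off: the paper's reduction is shorter but implicitly assumes flow scalability holds for flow vectors with zero components, whereas your argument keeps all $y_r>0$ and also handles single-link routes explicitly, which the paper glosses over. Your treatment of the ``activate one at a time'' step via continuity of $\phi_a^{s_j}$ at $0$ matches the paper's use of $\phi_a^{r}(0)=0$ from $U'_r(\infty)=0$.
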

\begin{proof}
It is immediate that the weighted $\alpha$-fair utility is flow scalable. To show the converse, we show a local traffic network can be reduced to a linear network. Take any route $r_0\in\mR$ which uses two or more links. Let them be $j_1,...,j_K$  and let $r_1,...,r_K$ be local traffic routes for each respective link. Set $y_r=0$ for all $r\notin\{r_0,...,r_K\}$. The resulting network is a linear network, and thus, by Theorem \ref{linear theorem}, the utility function associated with each route of this subnetwork is of the form
\begin{equation*}
 U_r(x_r)=
\begin{cases}
w_r \frac{x_r^{1-\alpha_r}}{1-\alpha_r} + c_r &\text{if } \alpha_r \neq 1,\\
w_r \log x_r + c_r &\text{if } \alpha_r=1.
\end{cases}
\end{equation*}
$\alpha_r>0$, $w_r>0$ and $c_r\in\mR$. Here $\alpha_r$ is same for all routes in this linear subnetwork, i.e. $\alpha_{r_k}=\alpha_{r_0}$ for all $k=1,...,K$ . 

We show, that since our network is connected, $\alpha_r$ does not depend on the route chosen. We know $\alpha_r=\alpha_{r(j)}$ for each local traffic route $r(j)$ intersecting route $r$. Take two routes $r$ and $\tilde{r}$.  Since our network is connect, for $j$ a link on route $r$ and $l$ a link on route $\tilde{r}$, there are a sequence $j_0,r_0,j_1,r_1,...,r_{K-1},j_{K}$ connecting $j_0=j$ and $j_K=l$. For each $k=0,...,k-1$, the subnetwork formed by $r_k$, $j_k$ and $j_{k+1}$ and their respective local traffic routes forms a linear network. Thus $\alpha_r$ is constant across all routes and associated local traffic routes in the sequences $j_0,r_0,j_1,r_1,...,r_{K-1},j_{K}$. This, thus, implies $\alpha_r=\alpha_{\tilde{r}}$.
\end{proof}

By simply adding a local traffic route to each link, any network topology can be made to satisfy the local traffic condition. Thus given the generality of this set of networks, it is natural to conjecture that flow scalability is equivalent to weighted $\alpha$-fairness for any network topology.

\begin{conjecture}
A network is flow scalable iff it maximizes a weighted $\alpha$-fair utility function.
\end{conjecture}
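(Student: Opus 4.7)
The forward direction is immediate by direct computation, as in the proofs of Theorems~\ref{iso thrm} and \ref{linear theorem}. For the reverse direction, my plan is to adapt the KKT-based argument of Theorem~\ref{linear theorem} to an arbitrary network topology. At an optimum of the network utility maximization~\eqref{max bandwidth}, the KKT conditions yield
\begin{equation*}
U'_r\big(\Lambda_r/y_r\big)=\sum_{j\in\mJ} A_{jr}\,\mu_j,\qquad r\in\mR,
\end{equation*}
for nonnegative multipliers $\mu_j$ (vanishing on non-binding constraints); flow scalability under $y\mapsto ay$ preserves $\Lambda$ and produces new multipliers $\mu_j(a)$ with $U'_r(\Lambda_r/(ay_r))=\sum_j A_{jr}\mu_j(a)$ for the same $\Lambda$. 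Defining $\phi_a^r(p):=U'_r((U'_r)^{-1}(p)/a)$ as in the proof of Theorem~\ref{linear theorem}, we obtain the functional identity
\begin{equation*}
\phi_a^r\Big(\sum_{j\in\mJ} A_{jr}\mu_j\Big)=\sum_{j\in\mJ} A_{jr}\mu_j(a),\qquad r\in\mR,
\end{equation*}
which is the object we must analyse.

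The central strategy is to realize, for each link $j$, configurations of $(y,C)$ in which only link $j$ (or a chosen subset of links) is binding. From single-link-binding configurations we obtain that, for every pair of routes $r,r'$ sharing a link, $\phi_a^r$ and $\phi_a^{r'}$ coincide on a suitable range; connectedness of the route--link incidence then propagates agreement of $\phi_a^r$ across all routes. Next, from multi-link-binding configurations that mimic the linear-network Lagrangian structure, we derive an additive equation of the form $\phi_a^{r_0}(\sum_k p_k)=\sum_k \phi_a^{r_k}(p_k)$ which forces linearity of $\phi_a^r$, and hence yields $U'_r(x)=w_r x^{-\alpha}$ as in the earlier proofs. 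A uniform $\alpha$ across all routes then follows from connectedness, and the weighted $\alpha$-fair form is recovered by integration.

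A conceptually cleaner alternative is to augment the network by appending, for each link $j$, a virtual local-traffic route $r(j)$ with small positive $y_{r(j)}$ and a chosen utility $\tilde U_{r(j)}$ (for instance, weighted $\alpha$-fair with free parameter). The enlarged network satisfies the local traffic condition, so the Corollary applies; one would then show, via a perturbation/continuity argument in $y_{r(j)}$, that flow scalability of the original network is inherited by the augmented one, forcing each $U_r$ to be weighted $\alpha$-fair.

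The principal obstacle in either approach is showing that, for an \emph{arbitrary} network topology, enough KKT configurations are realizable to pin down $\phi_a^r$ completely. If the incidence matrix $A$ is degenerate---for instance, if no collection of existing routes uses a prescribed link exclusively, or if certain links cannot be isolated as the unique binding constraint---then the direct subnetwork argument stalls, and one is forced onto the augmentation approach together with a delicate continuity result for the parametric optimization problem~\eqref{max bandwidth}. Handling such degenerate topologies, and verifying that flow scalability genuinely propagates under augmentation for \emph{some} choice of added utilities $\tilde U_{r(j)}$ without prejudicing the value of $\alpha$, is the step I expect to be the main technical hurdle.
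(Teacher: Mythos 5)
The statement you are addressing is presented in the paper as a \emph{conjecture}: the authors prove the equivalence only for linear networks (Theorem \ref{linear theorem}) and for networks satisfying the local traffic and connectedness conditions (the subsequent Corollary), and explicitly leave the general case open. Your proposal is therefore not being measured against an existing proof, and, as you concede in your final paragraph, it is a plan with the decisive step missing rather than a proof. The gap you identify is the genuine one: for an arbitrary incidence matrix $A$ there is no guarantee that enough Lagrange-multiplier configurations $(\mu_j)$ are realizable to force the additive functional equation $\phi_a^{r_0}\big(\sum_k p_k\big)=\sum_k\phi_a^{r_k}(p_k)$ on a set of arguments rich enough to conclude linearity of $\phi_a^r$. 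Worse, the conjecture as literally stated fails for degenerate topologies: for a single route on a single link the optimizer is $\Lambda(y)=C$ for \emph{every} increasing utility function, so flow scalability holds without $U_r$ being weighted $\alpha$-fair. Some nondegeneracy hypothesis, of the kind the paper imposes through the local traffic and connectedness conditions, is unavoidable, and your proposal does not identify what the right general hypothesis should be.

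Your augmentation idea also has a concrete flaw beyond being unfinished. The paper's Corollary works by \emph{restriction}: it sets $y_r=0$ outside a chosen subfamily of \emph{existing} routes, so the hypothesis (flow scalability of the given network) applies directly to the resulting linear subnetwork. Appending virtual local-traffic routes $r(j)$ with $y_{r(j)}>0$ instead produces a \emph{different} optimization problem, featuring new utility functions $\tilde U_{r(j)}$ that are not constrained by the hypothesis at all; flow scalability of the original network is a statement about $\Lambda(y)$ for $y$ supported on the original routes and does not transfer to the augmented problem. Nor does a limit $y_{r(j)}\to 0$ obviously recover it, since the multipliers and binding sets of the augmented problem need not converge to those of the original. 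So neither route in your proposal closes the argument; the honest conclusion is that the general case remains open, exactly as the paper states.
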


\section{Access scalability and weighted $\alpha$-fairness}\label{sec:access}
The network utility maximization problem is
\begin{equation}
 \max\quad U(y,\Lambda)\quad \text{over} \quad \Lambda \in \mC.\label{max bandwidth2}
\end{equation}
Given the flows on each route $y$, this finds an optimal way to allocate bandwidth $\Lambda$. Given the bandwidth to be allocated is $\Lambda$, what is the optimal number of flows permissible on each route? This leads to the maximization
\begin{equation}
 \max\quad U(y,\Lambda)\quad \text{over} \quad y \in \mathcal{Y}.\label{max bandwidth3}
\end{equation}
In this second optimization, we optimize the number of flows accessing routes inorder to guarantee the maximum average utility per flow.

Flow scalability (\ref{flow scale 1}-\ref{flow scale 2}) says, if we multiply the capacity of the network then the rate allocated to each route will multiply by the same amount. Similarly, we will say a utility function is access scalable, if multiplying the bandwidth available to each route means that the number flows accepted on each route will multiply by the same amount. More formally,
\begin{definition}
We say $y(\Lambda)$, the solution to optimization problem \eqref{max bandwidth3}, is access scalable if, for all $a>0$,
\begin{equation*}
 y(a\Lambda)=ay(\Lambda).
\end{equation*}
\end{definition}

In the following theorem, we observe that the only utility function that is both flow scalable and access scalable is a weighted $\alpha$-fair utility function.

\begin{theorem}
A network utility function $U(y,\Lambda)$ is both flow scalable for all $\mC$ and access scalable for all $\mathcal{Y}$ iff it is weighted $\alpha$-fair.
\end{theorem}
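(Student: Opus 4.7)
The plan is as follows. For the forward direction I would verify directly from the definition that any $W_{\alpha,w}$ satisfies both scalability properties. The key calculations are the identity $W_{\alpha,w}(ay,\Lambda)=a^{\alpha-1}W_{\alpha,w}(y,\Lambda)$ when $\alpha\neq 1$ (with a purely additive, $\Lambda$-independent logarithmic correction if $\alpha=1$) together with the joint-scaling invariance $W_{\alpha,w}(ay,a\Lambda)=W_{\alpha,w}(y,\Lambda)$ that holds for every $\alpha>0$. The first identity shows that scaling $y$ by $a$ multiplies the utility by a positive constant and therefore leaves the maximizer in $\Lambda$ unchanged, which is flow scalability. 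The second shows that jointly scaling $(y,\Lambda)$ by $a$ preserves the value of the utility, sending the maximizer over $\mY$ to the corresponding maximizer over $a\mY$, which is access scalability.

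For the backward direction, my plan is to reduce to the corollary of Theorem~\ref{linear theorem}. The crucial point is that the assumption ``flow scalable for all $\mC$'' includes flow scalability over capacity sets $\mC=\{\Lambda\in\bR_+^{\mR}:A\Lambda\leq C\}$ of networks satisfying the local traffic condition. Specializing to any one such connected network and applying the corollary forces every $U_r$ to be a weighted $\alpha$-fair summand, with a common exponent $\alpha_r\equiv\alpha$ across routes by the connectivity argument in that proof. Reassembling the per-route utilities into the network utility then gives $U=W_{\alpha,w}+c$, up to an additive constant.

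The main obstacle is more conceptual than technical: the argument reveals that access scalability does no real work in the backward direction, since flow scalability for all $\mC$ is already strong enough through the local traffic corollary, and the joint invariance of $W_{\alpha,w}$ then supplies access scalability automatically in the forward direction. The one piece of bookkeeping is the range hypothesis ``$U'_r$ has range $(0,\infty)$'' inherited from the corollary; if that is not to be read as an implicit assumption, I would try to recover it by using sufficiently rich families $\mY$ under access scalability so that the first-order conditions sweep out the required range of the marginal utilities.
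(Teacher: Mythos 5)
Your forward direction is fine and matches the paper's (which simply asserts it as an immediate calculation). The backward direction, however, has a genuine gap, and it is instructive to see why the paper takes a different route. Your plan routes everything through the corollary of Theorem~\ref{linear theorem}, but that corollary (and Theorem~\ref{linear theorem} itself) carries the extra hypothesis that each $U_r'$ has range $(0,\infty)$; the theorem you are proving makes no such assumption. Your proposed repair --- recovering the range condition ``by using sufficiently rich families $\mathcal{Y}$ under access scalability'' --- cannot work: the range of $U_r'$ is an intrinsic property of the fixed function $U_r$, and no choice of $\mathcal{Y}$ or of the first-order conditions can enlarge it. The range hypothesis is not bookkeeping; it is exactly what lets the linear-network proof realize arbitrary values $p_1,\dots,p_K\in(0,\infty)$ as marginal utilities and hence obtain the Cauchy-type functional equation for $\phi_a^r$ on all of $(0,\infty)$. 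Without it your reduction stalls.

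The paper avoids this entirely by reading ``for all $\mC$'' and ``for all $\mathcal{Y}$'' as including arbitrary two-point sets $\mC=\{\Lambda,\tilde{\Lambda}\}$ and $\mathcal{Y}=\{y,\tilde{y}\}$. Flow scalability on such a $\mC$ gives \eqref{thrm2:1}, access scalability on such a $\mathcal{Y}$ gives \eqref{thrm2:2}, and a continuity argument produces an intermediate scale $\tilde{a}$ with ${U}(\tilde{y},\tilde{\Lambda})={U}(\tilde{a}\tilde{y},\Lambda)$; chaining these yields the full network iso-elasticity property, after which Theorem~\ref{iso thrm} (which needs no range condition) finishes the proof. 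In that argument access scalability does essential work --- it supplies the ordering in $y$ that lets the two comparisons be spliced at the intermediate point $\tilde{a}\tilde{y}$ --- so your conclusion that access scalability is redundant is an artifact of having imported the stronger range hypothesis from Section~\ref{sec:Scale}. Your approach would be a correct alternative proof only under that additional assumption (and under the narrower reading of ``for all $\mC$'' as network polyhedra); as a proof of the theorem as stated, it does not go through.
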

\begin{proof}
First, it is an immediate calculation that any weighted $\alpha$-fair utility function is user scalable and access scalability for any set $\mC$ and $\mY$.

Conversely, suppose user scalability and access scalability hold for each $\mC$ and $\mY$. Taking $\mC=\{\Lambda, \tilde{\Lambda}\}$ and $\mY=\{y,\tilde{y}\}$, by user scalability and access scalability, respectively, we have that
\begin{align}
& {U}(y,\Lambda) \leq {U}(y,\tilde{\Lambda}) \quad\text{iff}\quad {U}(ay,\Lambda) \leq {U}(ay,\tilde{\Lambda}),\label{thrm2:1}\\
& {U}(y,\Lambda) \leq {U}(\tilde{y},{\Lambda}) \quad\text{iff}\quad {U}(ay,\Lambda) \leq {U}(a\tilde{y}.{\Lambda})\label{thrm2:2}
\end{align}
By continuity, there exists an $\tilde{a}$ such that
\begin{equation}
 {U}(\tilde{y},\tilde{\Lambda})={U}(\tilde{a}\tilde{y},{\Lambda}).\label{thrm2:3}
\end{equation}
Also, by \eqref{thrm2:1},
\begin{equation}
{U}(\tilde{y},\tilde{\Lambda})={U}(\tilde{a}\tilde{y},\Lambda) \quad\text{iff}\quad {U}(a\tilde{a}y,\Lambda)={U}(ay,\tilde{\Lambda}).\label{thrm2:4}
\end{equation}
Now, (\ref{thrm2:2}-\ref{thrm2:4}) imply the following set of equivalences.
\begin{align*}
 & {U}(y,\Lambda) \leq {U}(\tilde{y},\tilde{\Lambda})={U}(\tilde{a}\tilde{y},{\Lambda}) &\text{[by \eqref{thrm2:3}]}\\
\text{iff}\qquad & {U}(ay,\Lambda)\leq  {U}(a\tilde{a}\tilde{y},\Lambda)  &\text{[by \eqref{thrm2:2}]}\\
\text{iff}\qquad & {U}(ay,\Lambda)\leq  {U}(a\tilde{y},\tilde{\Lambda}).  &\text{[by \eqref{thrm2:4}]}
\end{align*}
Thus, ${U}(y,\Lambda)$ is network iso-elastic and so, by Theorem \ref{iso thrm}, ${U}(y,\Lambda)$ is a weighted $\alpha$-fair utility function.
\end{proof}

%
%

\end{document}